\numberwithin{equation}{section}
\numberwithin{figure}{section}
\theoremstyle{plain}
\newtheorem{thm}{\protect\theoremname}
\theoremstyle{plain}
\newtheorem{lem}[thm]{\protect\lemmaname}
\theoremstyle{plain}
\newtheorem{cor}[thm]{\protect\corollaryname}
\theoremstyle{definition}
\newtheorem{defn}[thm]{\protect\definitionname}
\providecommand{\corollaryname}{Corollary}
\providecommand{\definitionname}{Definition}
\providecommand{\lemmaname}{Lemma}
\providecommand{\theoremname}{Theorem}
\begin{document}
\global\long\def\Z{\mathcal{Z}}%

\global\long\def\I{\mathcal{I}}%

\title{A Quaternionic Nullstellensatz}
\author{Gil Alon and Elad Paran}
\address{GA: Department of Mathematics and Computer Science, The Open University
of Israel, 1 University Road, Raanana, 4353701, Israel}
\email{gilal@openu.ac.il}
\address{EP: Department of Mathematics and Computer Science, The Open University
of Israel, 1 University Road, Raanana, 4353701, Israel}
\email{paran@openu.ac.il}
\begin{abstract}
We prove a Nullstellensatz for the ring of polynomial functions in
$n$ non-commuting variables over Hamilton's ring of real quaternions.
We also characterize the generalized polynomial identities in $n$
variables which hold over the quaternions, and more generally, over
any division algebra.
\end{abstract}

\maketitle

\section{Introduction}

\subsection{The quaternionic Nullstellensatz}

Let $F$ be a field, and consider the polynomial ring $R=F[x_{1},...,x_{n}]$.
For any ideal $I\subseteq R$ one attaches the zero locus $\mathcal{Z}(I)\subseteq F^{n}$,
the set of points where all the polynomials in $I$ vanish. For any
set $Z\subseteq F^{n}$ one attaches the ideal $\mathcal{I}(Z)$ consisting
of all the polynomials in $R$ which vanish on every point of $Z$.

In the case where $F$ is algebraically closed, Hilbert's \emph{Nullstellensatz}
states that for any ideal $I\subseteq R$ we have $\mathcal{I}(\mathcal{Z}(I))=\sqrt{I}$,
the radical of $I$. Consequently, the ideals which are of the form
$\mathcal{I}(Z)$ for some $Z\subseteq F^{n}$ are precisely the radical
ones, i.e. the ideals which are closed under taking roots.

As the Nullstellensatz is a cornerstone of algebraic geometry, it
is natural to look for extensions of it to other fields. 

In the case $F=\mathbb{R}$, the \emph{Real Nullstellensatz }\cite[Corollary 4.1.8]{RAG}
states that $\mathcal{I}(\mathcal{Z}(I))=\sqrt[\mathbb{R}]{I}$, where
$\sqrt[\mathbb{R}]{I}$ is the \emph{real radical} of $I$, defined
by

\[
\sqrt[\mathbb{R}]{I}=\left\{ f\in R:\exists m\geq1,\text{}l\geq0,\text{ }f_{1},...,f_{l}\in R,\text{ \ensuremath{f^{2m}+\sum f_{i}^{2}}\ensuremath{\in I}}\right\} 
\]
Moreover, the ideals of the form $\mathcal{I}(Z)$ for some $Z\subseteq\mathbb{R}^{n}$
are characterized as those satifying the following condition:
\[
\forall l\geq1,\,\,f_{1},...,f_{l}\in R,\,\,\sum f_{i}^{2}\in I\Rightarrow f_{1},...,f_{l}\in I
\]
(See \cite[Theorem 4.1.4]{RAG}). Such ideals are called \emph{Real
}ideals.

In the present work we prove analogous results over Hamilton's non-commutative
algebra of real quaternions, $\mathbb{H}$. Let $R=\mathbb{H}[x_{1},...,x_{n}]$
be the ring of \emph{quaternionic polynomial functions}, namely, the
functions $f:\mathbb{H}^{n}\rightarrow\mathbb{H}$ expressible in
the form
\begin{equation}
f(x_{1},...,x_{n})=\sum_{l=1}^{r}a_{l,1}x_{\mu_{l,1}}a_{l,2}x_{\mu_{l,2}}...a_{l,k_{l}}x_{\mu_{l,k_{l}}}a_{l,k_{l}+1}\label{eq:function form}
\end{equation}

Where $r\geq0$, $k_{l}\geq0$, $\mu_{l,p}\in\{1,..,n\}$ and $a_{l,p}\in\mathbb{H}$.
The ring operations are simply pointwise addition and multiplication
of functions. By definition, we can evaluate each element of $R$
at every point $a=(a_{1},...,a_{n})\in\mathbb{H}^{n}$. Moreover,
for each fixed $a\in\mathbb{H}$, evaluation at $a$ is a ring homomorphism
from $R$ to $\mathbb{H}$. Hence, the notions of $\mathcal{I}(Z)\subseteq R$
for $Z\subseteq\mathbb{H}^{n}$ and $\mathcal{Z}(I)\subseteq\mathbb{H}^{n}$
for an ideal $I\subseteq R$ can be defined as in the commutative
case, and $\mathcal{I}(Z)$ is an ideal of $R$.\footnote{All the ideals in this paper are two-sided.}

The conjugation map on $\mathbb{H}$ is defined (for any $a,b,c,d\in\mathbb{R})$
by
\[
\overline{a+b\boldsymbol{i}+c\boldsymbol{j}+d\boldsymbol{k}}=a-b\boldsymbol{i}-c\boldsymbol{j}-d\boldsymbol{k}
\]
 For any $f\in R$, let $\overline{f}$ be the function $\overline{f}(a)=\overline{f(a)}$.
As we show below, we always have $\overline{f}\in R$ as well. Our
main theorem states:
\begin{thm}
\label{thm:quat_null_intro}\emph{(1)} Let $I$ be an ideal of $R$.
Then $\mathcal{I}(\mathcal{Z}(I))=\sqrt[\mathbb{H}]{I}$, where $\sqrt[\mathbb{H}]{I}$
(called the quaternionic radical of $I$) is defined by
\[
\sqrt[\mathbb{H}]{I}=\left\{ f\in R:\exists m\geq0,\text{\,\,}l\geq0,\text{ \,}f_{1},...,f_{l}\in R,\text{ \ensuremath{\left(f\overline{f}\right)^{m}+\sum f_{i}\overline{f_{i}}}\ensuremath{\in I}}\right\} 
\]

\emph{(2)} The ideals of the form $\mathcal{I}(Z)$ for some $Z\subseteq\mathbb{H}^{n}$,
are precisely the ideals which satisfy the following condition:

\[
\forall l\geq1,\,\,f_{1},...,f_{l}\in R,\,\,\sum f_{i}\overline{f_{i}}\in I\Rightarrow f_{1},...,f_{l}\in I
\]

We call such ideals quaternionic ideals. 
\end{thm}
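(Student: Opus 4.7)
The plan is to reduce the quaternionic Nullstellensatz to the classical Real Nullstellensatz by exploiting the structure of $R$ as an $\mathbb{R}$-algebra. First I would establish the $\mathbb{R}$-algebra isomorphism
\[
R \;\cong\; \mathbb{H} \otimes_{\mathbb{R}} \mathbb{R}[y_{1},\ldots,y_{4n}],
\]
where each quaternionic variable $x_{i}$ corresponds to $y_{4i-3} + y_{4i-2}\boldsymbol{i} + y_{4i-1}\boldsymbol{j} + y_{4i}\boldsymbol{k}$ and the $y_{k}$ parametrize the real coordinates of the $n$ quaternionic inputs. Every $f \in R$ then decomposes uniquely as $f = f_{0} + f_{1}\boldsymbol{i} + f_{2}\boldsymbol{j} + f_{3}\boldsymbol{k}$ with $f_{k} \in \mathbb{R}[y_{1},\ldots,y_{4n}]$, and $\overline{f}$ corresponds to conjugation in the $\mathbb{H}$-factor while fixing the $\mathbb{R}[y]$-factor.

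Next I would prove the ideal correspondence: every two-sided ideal $I \subseteq R$ has the form $I = \mathbb{H} \cdot J$ where $J := I \cap \mathbb{R}[y_{1},\ldots,y_{4n}]$. The key point is that if $f = f_{0} + f_{1}\boldsymbol{i} + f_{2}\boldsymbol{j} + f_{3}\boldsymbol{k} \in I$, then each $f_{k}$ lies in $I$, as one sees from elementary identities like $\tfrac{1}{2}(f - \boldsymbol{i}f\boldsymbol{i}) = f_{0} + f_{1}\boldsymbol{i}$ combined with two-sidedness. Under the identification $\mathbb{H}^{n} \cong \mathbb{R}^{4n}$ the zero set $\mathcal{Z}(I)$ coincides with the real zero set $\mathcal{Z}_{\mathbb{R}}(J) \subseteq \mathbb{R}^{4n}$, and consequently $\mathcal{I}(\mathcal{Z}(I)) = \mathbb{H} \cdot \mathcal{I}_{\mathbb{R}}(\mathcal{Z}_{\mathbb{R}}(J))$.

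For Part (1), the classical Real Nullstellensatz gives $\mathcal{I}_{\mathbb{R}}(\mathcal{Z}_{\mathbb{R}}(J)) = \sqrt[\mathbb{R}]{J}$, and the proof reduces to the algebraic identity $\mathbb{H} \cdot \sqrt[\mathbb{R}]{J} = \sqrt[\mathbb{H}]{\mathbb{H} \cdot J}$. The crucial observation is that $g\overline{g} = g_{0}^{2} + g_{1}^{2} + g_{2}^{2} + g_{3}^{2}$ lies in $\mathbb{R}[y_{1},\ldots,y_{4n}]$ for every $g \in R$, so the defining condition of $\sqrt[\mathbb{H}]{\,}$ is really a sum-of-squares condition inside the commutative polynomial ring. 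Expanding $(f\overline{f})^{m}$ via the multinomial theorem exhibits it as a sum of squares of real monomials in $f_{0},f_{1},f_{2},f_{3}$; in one direction this isolates $f_{k}^{2m}$ as a summand and yields $f_{k} \in \sqrt[\mathbb{R}]{J}$ for every $k$, while in the reverse direction the ideal and radical properties of $\sqrt[\mathbb{R}]{J}$ give $f\overline{f} \in \sqrt[\mathbb{R}]{J}$, producing the required quaternionic certificate by taking all auxiliary $g_{i}$ to be real polynomials.

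For Part (2), an ideal of $R$ has the form $\mathcal{I}(Z)$ iff $I = \mathcal{I}(\mathcal{Z}(I))$; via the correspondence $I = \mathbb{H}\cdot J$ this amounts, by the Real Nullstellensatz characterization, to $J$ being a real ideal of $\mathbb{R}[y_{1},\ldots,y_{4n}]$. Translating the quaternionic condition $\sum f_{i} \overline{f_{i}} \in I \Rightarrow f_{i} \in I$ through the decomposition of each $f_{i}$ into its four real components turns it into precisely the real-ideal condition on $J$, completing the proof. The main obstacle is the radical identity $\mathbb{H} \cdot \sqrt[\mathbb{R}]{J} = \sqrt[\mathbb{H}]{\mathbb{H} \cdot J}$, where the separate sum-of-squares certificates for $f_{0},f_{1},f_{2},f_{3}$ must be repackaged consistently into a single quaternionic certificate involving $(f\overline{f})^{m}$; the rest of the argument is bookkeeping showing that tensor-product constructions interact cleanly with sums of squares.
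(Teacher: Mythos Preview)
Your proposal is correct and follows essentially the same strategy as the paper: both reduce to the Real Nullstellensatz via the isomorphism $R\cong\mathbb{H}\otimes_{\mathbb{R}}\mathbb{R}[y_{1},\ldots,y_{4n}]$, the bijection between two-sided ideals of $R$ and ideals of $\mathbb{R}[y_{1},\ldots,y_{4n}]$, and the observation that $f\overline{f}=f_{0}^{2}+f_{1}^{2}+f_{2}^{2}+f_{3}^{2}$ turns quaternionic certificates into real sum-of-squares certificates. Your treatment of the inclusion $\mathbb{H}\cdot\sqrt[\mathbb{R}]{J}\subseteq\sqrt[\mathbb{H}]{I}$ via the ideal property of $\sqrt[\mathbb{R}]{J}$ (so that $f\overline{f}\in\sqrt[\mathbb{R}]{J}$ directly) is in fact slightly slicker than the paper's, which instead combines separate certificates for the $f_{k}$ through a multinomial expansion of $(f\overline{f})^{m}$ for large $m$.
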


We also characterize the quaternionic radical of an ideal $I$ as
the intersection of all prime quaternionic ideals containing it, in
analogy with similar theorems over $\mathbb{C}$ and $\mathbb{R}$.
See Corollary \ref{cor:prime_intersection} below.

\subsection{Polynomial functions}

Our proof of Theorem \ref{thm:quat_null_intro} relies on some algebraic
properties of the ring of polynomial functions over a division algebra.

Let $D$ be a division algebra, $F=C(D)$ its center, and let $m=[D:F]$.
Let $R=D[x_{1},...,x_{n}]$ be the ring of polynomial functions in
$x_{1},...,x_{n}$, namely, functions $f:D^{n}\rightarrow D$ expressible
in the form (\ref{eq:function form}) with coefficients $a_{l,p}\in D$.
For any $k\geq1$, let $D_{c}[x_{1},...,x_{k}]$ be the ring of polynomials
in $k$ \emph{central }variables: These polynomials take the classical
form 
\[
\sum a_{I}x^{I}=\sum_{0\leq i_{1},...,i_{k}\leq N}a_{i_{1},...,i_{k}}\prod_{l=1}^{k}x_{l}^{i_{l}}
\]
 for some $N\geq0$ and $a_{I}\in D$. Their multiplication is defined
by 
\[
\left(\sum a_{I}x^{I}\right)\left(\sum b_{J}x^{J}\right)=\sum a_{I}b_{J}x^{I+J}
\]
We will denote the resulting ring by $D_{c}[x_{1},...,x_{k}]$ (the
subscript $c$ stands for the variables being central: Indeed, the
variables $x_{1},...,x_{k}$ are in the center of this ring). For
any $k$-tuple $a=(a_{1},...,a_{k})\in D^{k}$, there is an evaluation
map $e_{a}:D_{c}[x_{1},...,x_{k}]\rightarrow D,$ defined by $e_{a}(f)=f(a)$.
However, unlike the case of $D[x_{1},...,x_{k}]$, $e_{a}$ is a ring
homomorphism if and only if $a\in F^{k}$. We will only evaluate elements
of $D_{c}[x_{1},...,x_{k}]$ at such points.

It was proven by Wilczy\'{n}ski \cite[Theorem 4.1]{Wilczy=000144ski}
that there is, in fact, a ring isomorphism

\begin{equation}
D[x_{1},...,x_{n}]\cong D_{c}[y_{1},y_{2},...,y_{mn}]\label{eq:main isomorphism}
\end{equation}
 In our proof of Theorem \ref{thm:quat_null_intro} we use an explicit
form of the above isomorphism, for which we prove a connection between
substitution of points in $D^{n}$ in elements of $D[x_{1},...,x_{n}]$
and substitution of points in $F^{mn}$ in the corresponding elements
of $D_{c}[y_{1},y_{2},...,y_{mn}]$. See Theorem \ref{eq:main isomorphism}
below.

\subsection{Generalized polynomial identities}

The final aspect of this work concerns \emph{generalized polynomial
identities. }These have been introduced by Amitsur in \cite{Amitsur},
as a generalization of the much studied polynomial identities. We
consider the algebra of formal non-commutative polynomials $A=D_{F}\langle x_{1},...,x_{n}\rangle:=D*_{F}F\langle x_{1},...,x_{n}\rangle$
(this follows the notation in \cite{Cohn}). Recall that each element
of $A$ is of the form (\ref{eq:function form}) with coefficients
in $D$, and that the scalars in $F$ are in the center of $A$. We
have a canonical homomorphism $A\rightarrow D[x_{1},...,x_{n}]$.
The kernel of the above homomorphism is the set of generalized polynomial
identities in $n$ variables over $D$. Let us denote this ideal by
$\text{GPI}(D,n)$. In Theorem \ref{thm:Generalized Identities} we
prove that $\text{GPI}(D,n)$ is finitely generated as an ideal of
$A$, and describe explicitly a set of $3\binom{m}{2}+1$ generators
of this ideal.

\subsection{The case $D=\mathbb{H}$}

To further explain our main ideas, let us write the isomorphism $\mathbb{H}[x]\cong\mathbb{H}_{c}[y_{1},y_{2},y_{3},y_{4}]$
explicitly. Consider the following elements of $\mathbb{H}[x]$:

\begin{align*}
Y_{1} & =\frac{1}{4}(x-\boldsymbol{i}x\boldsymbol{i}-\boldsymbol{j}x\boldsymbol{j}-\boldsymbol{k}x\boldsymbol{k})\\
Y_{2} & =\frac{1}{4}(\boldsymbol{j}x\boldsymbol{k}-x\boldsymbol{i}-\boldsymbol{i}x-\boldsymbol{k}x\boldsymbol{j})\\
Y_{3} & =\frac{1}{4}(\boldsymbol{k}x\boldsymbol{i}-x\boldsymbol{j}-\boldsymbol{j}x-\boldsymbol{i}x\boldsymbol{k})\\
Y_{4} & =\frac{1}{4}(\boldsymbol{i}x\boldsymbol{j}-x\boldsymbol{k}-\boldsymbol{k}x-\boldsymbol{j}x\boldsymbol{i})
\end{align*}
It is a matter of direct verification to see that for any quaternion
$q=a_{1}+a_{2}\boldsymbol{i}+a_{3}\boldsymbol{j}+a_{4}\boldsymbol{k}$
(with $a_{1},a_{2},a_{3},a_{4}\in\mathbb{R}$) we have $Y_{i}(q)=a_{i}$
for all $i$. So each $Y_{i}$, as a polynomial function, takes only
real values, and in particiular we have $Y_{i}Y_{j}=Y_{j}Y_{i}$ for
all $i,j$. Moreover, we have the identity $x=Y_{1}+\boldsymbol{i}Y_{2}+\boldsymbol{j}Y_{3}+\boldsymbol{k}Y_{4}$.
Define a homomorphism $\phi:\mathbb{H}[x]\rightarrow\mathbb{H}_{c}[y_{1},..,y_{4}]$
by $\phi(x)=y_{1}+\boldsymbol{i}y_{2}+\boldsymbol{j}y_{3}+\boldsymbol{k}y_{4}$.
Then $\phi$ is an isomorphism, as an inverse homomorphism can be
defined by $y_{i}\mapsto Y_{i}$.

As we show below, the ideal of generalized identities in one variable
over $\mathbb{H}$, $\text{GPI}(\mathbb{H},1)$ is generated by the
following 19 elements: $Y_{s}Y_{t}-Y_{t}Y_{s}$ (for $1\leq s<t\leq4$),
$Y_{s}a-aY_{s}$ (for $1\leq s\leq4$ and $a\in\{\boldsymbol{i},\boldsymbol{j},\boldsymbol{k}\}$)
and $x-\left(Y_{1}+\boldsymbol{i}Y_{2}+\boldsymbol{j}Y_{3}+\boldsymbol{k}Y_{4}\right)$.

We further note that substituting a quaternion $q=a_{1}+a_{2}\boldsymbol{i}+a_{3}\boldsymbol{j}+a_{4}\boldsymbol{k}$
(with $a_{1},a_{2},a_{3},a_{4}\in\mathbb{R}$) in any polynomial $f\in\mathbb{H}[x]$
has the same effect as substituting the real numbers $a_{1},a_{2},a_{3},a_{4}$
in $\phi(f)$. We can therefore establish a dictionary between systems
of equations in quaternionic variables and such systems in real variables.
Via this correspondence, the Real Nullstellensatz translates to a
quaternionic one, which is Theorem \ref{Our Nullstellensatz}.

\subsection{Outline}

The rest of this paper is organized as follows: In section 2, we collect
standard results from the theory of central simple algebras to prove
that an analog of the functions $Y_{1},..,Y_{4}$ exists for any division
algebra. In section 3, we discuss the isomorphism (\ref{eq:main isomorphism})
and describe a set of generators for $\text{GPI}(D,n)$. In section
4 we turn our attention to the case $D=\mathbb{H}$ and prove the
Nullstellensatz for $\mathbb{H}[x_{1},..,x_{n}]$.

\subsection*{Acknowledgement}

We are thankful to Bruno Deschamps for suggesting us to extend some
of the results in this paper, initially stated over the quaternions,
to any division algebra.

\section{Coordinate functions as noncommutative polynomials}

We start with the following lemmas, which use standard arguments on
central simple algebras.
\begin{lem}
\label{lem:indep}Let $D$ be a division algebra. If $a_{1},...,a_{n}\in D$
are linearly independent over the center $F=C(D)$, and $b_{1},..,b_{n}\in D$
are not all equal to $0$, then there exists $x\in D$ such that $\sum a_{i}xb_{i}\neq0$.
\end{lem}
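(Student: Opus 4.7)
The plan is to translate the assertion into a linear-algebra statement inside the tensor product $D\otimes_{F}D^{\mathrm{op}}$. Consider the $F$-algebra homomorphism
\[
\Phi:D\otimes_{F}D^{\mathrm{op}}\longrightarrow \mathrm{End}_{F}(D),\qquad \Phi(a\otimes b)(x)=axb.
\]
The negation of the desired conclusion says precisely that $\Phi\bigl(\sum_{i}a_{i}\otimes b_{i}\bigr)=0$, so it suffices to show both that $\Phi$ is injective and that $\sum_{i}a_{i}\otimes b_{i}$ is nonzero in $D\otimes_{F}D^{\mathrm{op}}$.

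For the injectivity of $\Phi$ I would invoke the standard fact that if $D$ is a central simple $F$-algebra then $D\otimes_{F}D^{\mathrm{op}}$ is a simple ring (the tensor product over $F$ of a central simple $F$-algebra with any simple $F$-algebra is simple). Since $\Phi$ sends $1\otimes 1$ to $\mathrm{id}_{D}$ it is nonzero, and its kernel, being a two-sided ideal of a simple ring, must vanish.

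To see that the tensor $\sum_{i}a_{i}\otimes b_{i}$ is nonzero, I would extend $a_{1},\ldots,a_{n}$ to an $F$-basis $a_{1},\ldots,a_{m}$ of $D$ and fix an $F$-basis $e_{1},\ldots,e_{m}$ of $D^{\mathrm{op}}$ (identical to $D$ as an $F$-vector space); then $\{\,a_{i}\otimes e_{j}\,\}_{1\le i,j\le m}$ is an $F$-basis of $D\otimes_{F}D^{\mathrm{op}}$. Writing each $b_{i}=\sum_{j}c_{ij}e_{j}$ with $c_{ij}\in F$, one expands
\[
\sum_{i=1}^{n}a_{i}\otimes b_{i}\;=\;\sum_{i,j}c_{ij}\,(a_{i}\otimes e_{j}),
\]
which vanishes if and only if every $c_{ij}=0$, i.e.\ every $b_{i}=0$, contrary to the hypothesis.

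The main obstacle is the injectivity of $\Phi$; this is where the central-simplicity of $D$ is essential, since for a general $F$-algebra the assignment $a\otimes b\mapsto(x\mapsto axb)$ can have a nontrivial kernel and the analogous statement may fail. Once injectivity has been secured from the CSA input, the remainder is a routine consequence of the freeness of the tensor product over the field $F$.
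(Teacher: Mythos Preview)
Your argument is correct: the map $\Phi:D\otimes_{F}D^{\mathrm{op}}\to\mathrm{End}_{F}(D)$ is an algebra homomorphism, its injectivity follows from the simplicity of $D\otimes_{F}D^{\mathrm{op}}$, and the tensor $\sum a_{i}\otimes b_{i}$ is nonzero by the elementary basis computation you give. In the paper's setting $D$ is finite-dimensional over $F$ (the dimension $m=[D:F]$ is used immediately afterwards), so the CSA input you invoke is available.

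The paper, however, argues in a genuinely different and more elementary way: it takes a minimal counterexample, normalizes $b_{1}=1$, and for arbitrary $b\in D$ compares $\sum a_{i}(xb)b_{i}$ with $\bigl(\sum a_{i}xb_{i}\bigr)b$ to force each $b_{i}$ into the center, whence $\sum a_{i}b_{i}=0$ contradicts independence. This argument uses nothing beyond the definition of the center and works for an arbitrary division ring, with no finiteness hypothesis. In fact the paper's computation \emph{is} essentially the standard hands-on proof that $D\otimes_{F}D^{\mathrm{op}}$ has no nontrivial ideals, so what you invoke as a black box is precisely what the paper proves directly. Your route is cleaner conceptually and situates the lemma in the CSA framework; the paper's route is self-contained and slightly more general.
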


\begin{proof}
For $n=1$ the claim is trivial. Assume the contrary of the claim,
and that $b_{1},...,b_{n}$ is a shortest counterexample, i.e. $n$
is minimal. Then all $b_{i}$ are nonzero. We may assume without loss
of generality that $b_{1}=1$. By our assumption, we have $\sum a_{i}xb_{i}=0$
for all $x\in D$. Let $b\in D$. By substituting $xb$ for $x$ in
the above equality we get $\sum a_{i}xbb_{i}=0$. Multiplying from
the right by $b$, we get $\sum a_{i}xb_{i}b=0$. Subtracting, we
get $\sum a_{i}x(bb_{i}-b_{i}b)=0$. Since $bb_{1}-b_{1}b=0$, we
must have by the minimality assumption $bb_{i}-b_{i}b=0$ for all
$i$. Hence $b_{i}\in F$ for all $i$. Setting $x=1$ we get $\sum a_{i}b_{i}=0$,
contrary to the independence assumption.
\end{proof}
\begin{lem}
\label{any endomorphism}Let $D$ be a division algebra, and let $F=C(D)$.
Then any $F$-linear endomorphism of $D$ can be expressed in the
following form: $f(x)=\sum_{i=1}^{r}a_{i}xb_{i}$ for some fixed $a_{i},b_{i}\in D$.
\end{lem}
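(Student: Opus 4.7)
The plan is to realize every $F$-linear endomorphism of $D$ as a sum of ``sandwich'' operators $x \mapsto a x b$. The key tool is the natural $F$-algebra homomorphism
\[
\Phi : D \otimes_{F} D^{\text{op}} \longrightarrow \text{End}_{F}(D), \qquad \Phi(a \otimes b)(x) = a x b,
\]
whose image is precisely the set of $F$-linear maps of the desired form. Hence it is enough to show that $\Phi$ is surjective, and the natural route is to prove injectivity and then compare dimensions.

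First I would show that $\Phi$ is injective, using Lemma \ref{lem:indep}. Any element of $D \otimes_{F} D^{\text{op}}$ can be rewritten as $\sum_{i=1}^{n} a_{i} \otimes b_{i}$ with $a_{1}, \ldots, a_{n} \in D$ linearly independent over $F$ (by expanding the left-hand factors in an $F$-basis of $D$, collecting terms on the right, and discarding zero summands). If such an element lies in $\ker \Phi$, then $\sum_{i} a_{i} x b_{i} = 0$ for every $x \in D$, and Lemma \ref{lem:indep} forces $b_{1} = \cdots = b_{n} = 0$. Therefore $\ker \Phi = 0$.

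Next, a dimension count finishes the argument. Since $[D:F] = m$, both $D \otimes_{F} D^{\text{op}}$ and $\text{End}_{F}(D)$ are $F$-vector spaces of dimension $m^{2}$, so an injective $F$-linear map between them is automatically surjective. Consequently every $F$-linear endomorphism $f$ of $D$ equals $\Phi(\sum_{i} a_{i} \otimes b_{i})$ for some $a_{i}, b_{i} \in D$, which is precisely the representation $f(x) = \sum_{i=1}^{r} a_{i} x b_{i}$ claimed by the lemma.

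The only nontrivial point is the rewriting step in the injectivity argument: one must legitimately pass from an arbitrary tensor to one whose left factors are $F$-linearly independent without losing any information. This is routine linear algebra over $F$, and it is the sole place beyond Lemma \ref{lem:indep} and the $m^{2}$-dimension count where anything is used. No appeal to Skolem--Noether or to the structure theory of central simple algebras is required in this elementary form of the proof.
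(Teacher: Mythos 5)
Your proof is correct and is essentially the paper's argument in different packaging: the paper shows directly that the $m^2$ sandwich maps $f_{ij}(x)=v_ixv_j$ are $F$-linearly independent in $\mathrm{End}_F(D)$ (using Lemma \ref{lem:indep}) and concludes by the dimension count $\dim_F\mathrm{End}_F(D)=m^2$, whereas you phrase the same two ingredients as injectivity of $D\otimes_F D^{\mathrm{op}}\to\mathrm{End}_F(D)$ followed by the same dimension count. Both routes use exactly Lemma \ref{lem:indep} and the $m^2$ count, so the content is identical.
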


\begin{proof}
Let $m=[D:F].$ Let $U$ be the $F$-vector space of maps $f:D\rightarrow D$
of the form $f(x)=\sum_{i=1}^{r}a_{i}xb_{i}$. We have $U\subseteq\text{End}_{F}(D)$.
Clearly, $\dim_{F}\text{End}_{F}(D)=m^{2}$. Let $v_{1},...,v_{m}$
be an $F$-linear basis of $D$. For any $1\leq i,j\leq m$ let $f_{ij}\in U$
be the function $f_{ij}(x)=v_{i}xv_{j}$. By Lemma \ref{lem:indep},
the functions $f_{ij}$ are $F$-linearly independent. As there are
$m^{2}$ such functions, we have $U=\text{End}_{F}(D)$.
\end{proof}
\begin{cor}
\label{cor:Existence of constants}Let $D$ be a division algebra
of dimension $m$ over its center $F=C(D)$. Let $v_{1},..,v_{m}$
be an $F$-linear basis of $D$. Then there exist $b_{st}^{i}\in F$
(for $1\leq i,s,t\leq m)$ such that for any $x=\sum_{i=1}^{m}c_{i}v_{i}$
(with $c_{i}\in F$), we have
\[
c_{i}=\sum_{s=1}^{m}\sum_{t=1}^{m}b_{st}^{i}v_{s}xv_{t}
\]
Moreover, the elements $b_{st}^{i}$ are unique.
\end{cor}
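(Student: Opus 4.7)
My plan is to extract the statement as a direct consequence of (the proof of) Lemma \ref{any endomorphism}. The key observation is that the proof of that lemma actually yields more than its statement: the specific $m^2$ maps $f_{st}(x) = v_{s} x v_{t}$ were shown by Lemma \ref{lem:indep} to be $F$-linearly independent inside $\mathrm{End}_{F}(D)$, which has dimension $m^2$. Hence the family $\{f_{st}\}_{1 \le s,t \le m}$ is in fact an $F$-basis of $\mathrm{End}_{F}(D)$, not merely a spanning set.

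With this in hand, the plan is to apply this basis property to the coordinate projection. First, I would define, for each $i$, the map $\pi_{i} \colon D \to D$ sending $x = \sum_{j} c_{j} v_{j}$ to $c_{i}$ (viewed as an element of $D$ via the inclusion $F \hookrightarrow D$). This map is visibly $F$-linear, so $\pi_{i} \in \mathrm{End}_{F}(D)$. Expanding $\pi_{i}$ in the basis $\{f_{st}\}$ produces unique scalars $b_{st}^{i} \in F$ with
\[
\pi_{i}(x) = \sum_{s=1}^{m}\sum_{t=1}^{m} b_{st}^{i}\, v_{s} x v_{t}
\]
for every $x \in D$. Since $\pi_{i}(x) = c_{i}$ by construction, this gives the claimed formula, and uniqueness of the $b_{st}^{i}$ is immediate from the linear independence of the $f_{st}$.

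There is no real obstacle; the only content to isolate is the upgrade from spanning to basis in the proof of Lemma \ref{any endomorphism}. If I wanted to avoid even referencing that proof, I could argue uniqueness directly: if $\sum_{s,t} b_{st}^{i} v_{s} x v_{t} = 0$ for all $x$, then fixing $s$ and invoking Lemma \ref{lem:indep} on the independent family $\{v_{s}\}$ forces $\sum_{t} b_{st}^{i} v_{t} = 0$ for every $s$, and a second application of linear independence of the $v_{t}$ forces all $b_{st}^{i} = 0$. Existence then follows by a dimension count, exactly as in Lemma \ref{any endomorphism}.
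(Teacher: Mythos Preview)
Your proposal is correct and essentially the same as the paper's: both define the $i$-th coordinate projection as an $F$-linear endomorphism of $D$ and expand it in terms of the maps $x\mapsto v_s x v_t$, invoking Lemma~\ref{lem:indep} for uniqueness. The only cosmetic difference is that you package existence and uniqueness together via the observation that the $f_{st}$ form a \emph{basis} of $\mathrm{End}_F(D)$, whereas the paper quotes Lemma~\ref{any endomorphism} for existence (obtaining some representation $\sum a_j x b_j$, then re-expanding the $a_j,b_j$ in the basis $v_1,\dots,v_m$) and separately cites Lemma~\ref{lem:indep} for uniqueness.
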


\begin{proof}
Let us define, for each $1\leq i\leq m$, $\phi_{i}:D\rightarrow F$
by $\phi_{i}(\sum_{i=1}^{m}c_{j}v_{j})=c_{i}$. Then $\phi_{i}$ is
$F$-linear, so by Lemma \ref{any endomorphism} it has a representastion
of the form $\phi_{i}(x)=\sum a_{j}xb_{j}$. The existence of $b_{st}^{i}$
follows by expressing each of the elements $a_{j},b_{j}$ as an $F$-linear
combination of $v_{1},...,v_{m}$. The uniqueness follows from Lemma
\ref{lem:indep}.
\end{proof}

\section{The ring isomorphism\label{sec:The-ring-isomorphism}}

We are now ready to prove our version of the isomorphism (\ref{eq:main isomorphism}):
\begin{thm}
\label{thm:main isomorphism}Let $D$ be a division algebra of dimension
$m>1$ over its center $F=C(D)$. Let $v_{1},...,v_{m}$ be an $F$-linear
basis of $D$. Let $b_{st}^{i}\in F$ be the elements deternmined
by $v_{1},...,v_{m}$ as in Corollary \ref{cor:Existence of constants}.
Consider the following elements $Y_{ij}\in D[x_{1},...,x_{n}]$ (for
$1\leq i\leq n$, $1\leq j\leq m$):
\[
Y_{ij}=\sum b_{st}^{j}v_{s}x_{i}v_{t}
\]
Then there exists a unique isomorphism 
\[
\phi:D[x_{1},...,x_{n}]\xrightarrow{{\sim}}D_{c}[y_{ij}:1\leq i\leq n,1\leq j\leq m]
\]
satisfying $\phi(Y_{ij})=y_{ij}$ for all $i,j$. Moreover, we have
for any $\left(a_{ij}\right)\in M_{n\times m}(F)$, and any $f\in D[x_{1},...,x_{n}]$,
\begin{equation}
f\left(\sum a_{1j}v_{j},...,\sum a_{nj}v_{j}\right)=\phi(f)\left(\left(a_{ij}\right)\right)\label{eq:subst formula}
\end{equation}
\end{thm}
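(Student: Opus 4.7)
The plan is to define $\phi$ as the natural ring homomorphism sending $x_i \mapsto \sum_j v_j y_{ij}$, then build an explicit two-sided inverse out of the $Y_{ij}$.

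First I would work at the ``free'' level: set $A = D *_F F\langle x_1, \ldots, x_n\rangle$ and define $\tilde\phi: A \to D_c[y_{ij}]$ by $\tilde\phi|_D = \mathrm{id}$ and $\tilde\phi(x_i) = \sum_{j=1}^{m} v_j y_{ij}$, which is unambiguous by the universal property of the free product. A direct computation then yields the substitution formula on $A$: for any $(a_{ij}) \in M_{n \times m}(F)$, both $f \mapsto \tilde\phi(f)((a_{ij}))$ and $f \mapsto f(\sum_j a_{1j} v_j, \ldots, \sum_j a_{nj} v_j)$ are ring homomorphisms $A \to D$ that agree on $D$ (each is the identity) and on each $x_i$ (both send it to $\sum_j a_{ij} v_j$), hence on all of $A$.

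Next I would descend $\tilde\phi$ through the canonical surjection $A \twoheadrightarrow D[x_1, \ldots, x_n]$. Any $f$ in the kernel (a generalized polynomial identity) satisfies $\tilde\phi(f)((a_{ij})) = 0$ for all $(a_{ij}) \in F^{nm}$ by the substitution formula. Expanding the coefficients of $\tilde\phi(f)$ in the basis $v_1, \ldots, v_m$ produces $m$ polynomials in $F[y_{ij}]$ that vanish on all of $F^{nm}$; since $m>1$ rules out $D$ being a finite division ring by Wedderburn's theorem, $F$ is infinite, so each of these polynomials is identically zero, forcing $\tilde\phi(f) = 0$. Thus $\tilde\phi$ factors through a homomorphism $\phi: D[x_1, \ldots, x_n] \to D_c[y_{ij}]$ that inherits (\ref{eq:subst formula}).

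To see $\phi$ is an isomorphism I would exhibit an explicit inverse $\psi: D_c[y_{ij}] \to D[x_1, \ldots, x_n]$ by $\psi|_D = \mathrm{id}$ and $\psi(y_{ij}) = Y_{ij}$. This is well-defined because each $Y_{ij}$ takes values in $F = C(D)$ by Corollary \ref{cor:Existence of constants} and is therefore central in $D[x_1, \ldots, x_n]$, matching the centrality of the $y_{ij}$. The composite $\psi \circ \phi$ sends $x_i$ to $\sum_j v_j Y_{ij}$, which equals $x_i$ by the same corollary applied to $x = x_i$; the composite $\phi \circ \psi$ sends $y_{ij}$ to $\phi(Y_{ij}) = \sum_{s,t} b_{st}^j v_s \bigl(\sum_k v_k y_{ik}\bigr) v_t = \sum_k y_{ik} \bigl(\sum_{s,t} b_{st}^j v_s v_k v_t\bigr) = \sum_k \delta_{jk} y_{ik} = y_{ij}$, using centrality of the $y_{ik}$ and Corollary \ref{cor:Existence of constants} applied to $x = v_k$ (whose $j$-th coordinate is $\delta_{jk}$). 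Uniqueness is immediate: under the implicit convention $\phi|_D = \mathrm{id}$, the relation $x_i = \sum_j v_j Y_{ij}$ combined with $\phi(Y_{ij}) = y_{ij}$ forces $\phi(x_i) = \sum_j v_j y_{ij}$, pinning $\phi$ on all generators.

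The main obstacle I anticipate is the descent step: reducing ``$\tilde\phi(f)$ vanishes on $F^{nm}$'' to ``$\tilde\phi(f) = 0$ in $D_c[y_{ij}]$'' requires a careful coordinate expansion against the basis $v_1, \ldots, v_m$ and the infiniteness of $F$ guaranteed by $m > 1$; the remainder of the argument is bookkeeping against Corollary \ref{cor:Existence of constants}.
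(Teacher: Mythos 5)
Your proof is correct, and it rests on the same ingredients as the paper's: Corollary \ref{cor:Existence of constants} giving both the centrality of the $Y_{ij}$ and the identity $x_i = \sum_j Y_{ij}v_j$, the substitution formula, and the observation that $m>1$ forces $F$ infinite via Wedderburn so that a polynomial in $D_c[y_{ij}]$ vanishing on $F^{nm}$ is zero. The difference is one of direction: the paper never introduces the free algebra at all. It directly defines $\psi: D_c[y_{ij}] \to D[x_1,\ldots,x_n]$ by $y_{ij} \mapsto Y_{ij}$ (well-defined by centrality), proves injectivity by the same evaluation argument you use for the descent, proves surjectivity from $x_i = \psi(\sum_j v_j y_{ij})$, and sets $\phi = \psi^{-1}$; this settles both existence and the inverse in a single bijectivity argument, with the substitution formula $g((a_{ij})) = \psi(g)(a_1,\ldots,a_n)$ established for $\psi$ and then read backwards. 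Your version does a bit more bookkeeping (descent through $A = D *_F F\langle x_1,\ldots,x_n\rangle$ plus verifying both composites $\psi\circ\phi$ and $\phi\circ\psi$), but the payoff is that your descent step already exhibits $\ker(A \to D[x_1,\ldots,x_n]) \subseteq \ker\tilde\phi$, which is precisely the content that the paper has to redo in the proof of Theorem \ref{thm:Generalized Identities}. One small point: uniqueness does require pinning $\phi|_D = \mathrm{id}$, which the paper also leaves implicit (it is built into how $\psi$ is defined on general elements $\sum a_I(y_{ij})^I$); you are right to flag it as a convention.
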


\begin{proof}
By Corollary \ref{cor:Existence of constants}, any substitution of
$a_{1},...,a_{n}\in F$ in $Y_{ij}$ yields an element of $F$. Hence,
the elements $Y_{ij}$ commute with the elements of $D$ and with
each other. Moreover, by the same corollary, for any $1\leq i\leq n$
the following identity holds: 
\begin{equation}
x_{i}=\sum_{j}Y_{ij}v_{j}\label{identity for xi}
\end{equation}
Let us now define a homomorphism $\psi:D_{c}[y_{ij}]\rightarrow D[x_{1},...,x_{n}]$
by $\psi(y_{ij})=Y_{ij}$ for all $i,j$, and generally,
\[
\psi(\sum a_{I}(y_{ij})^{I})=\sum a_{I}(Y_{ij})^{I}
\]

Since the elements $Y_{ij}$ commute with the elements of $D$ and
with each other, $\psi$ is indeed a homomorphism.

For any $\left(a_{ij}\right)\in M_{n\times m}(F),$ let $a_{i}=\sum a_{ij}v_{j}$.
By Corollary \ref{cor:Existence of constants}, we have 
\[
Y_{ij}(a_{1},...,a_{n})=a_{ij}
\]
Therefore, for any $g=\sum a_{I}(y_{ij})^{I}\in D_{c}[y_{ij}]$ we
have (since substitution of $a_{1},...,a_{n}$ is a homomorphism from
$D[x_{1},...,x_{n}]$ to $D$)
\begin{align*}
g\left(a_{ij}\right) & =\sum a_{I}(a_{ij})^{I}\\
 & =\sum a_{I}(Y_{ij}(a_{1},...,a_{n}))^{I}\\
 & =\left(\sum a_{I}(Y_{ij})^{I}\right)(a_{1},...,a_{n})\\
 & =\psi(g)(a_{1},...,a_{n})
\end{align*}
This implies that $\psi$ is injective: Indeed, for $0\neq g\in D_{c}[y_{ij}]$
there exists a substitution $(a_{ij})\in M_{n\times m}(F)$ such that
$g(a_{ij})\neq0$ (note that by Wedderburn's theorem, since $m>1$,
$F$ must be infinite). By the above identity $\psi(g)$ attains a
nonzero value, so $\psi(g)\neq0$. $\psi$ is also surjective since
by (\ref{identity for xi}), $x_{i}=\psi(\sum v_{j}y_{ij})$. Hence
$\psi$ is an isomorphism. Let $\phi$ be the inverse of $\psi$,
then $\phi$ satisfies all the desired properties.
\end{proof}
As a consequence, we get a set of generators for the generalized polynomial
identities over $D$.
\begin{thm}
\label{thm:Generalized Identities}Under the same notation as in Theorem
\ref{thm:main isomorphism}, the ideal of identities $GPI(D,n)=\text{ker}(D\langle x_{1},...,x_{n}\rangle\rightarrow D[x_{1},...,x_{n}])$
is generated by the following elements:
\begin{itemize}
\item $v_{k}Y_{ij}-Y_{ij}v_{k}$ for $1\leq i\leq n$, $1\leq j,k\leq m$
\item $Y_{ij}Y_{i'j'}-Y_{i'j'}Y_{ij}$ for $1\leq i,i'\leq n$, $1\leq j,j'\leq m$
\item $x_{i}-\sum_{j}Y_{ij}v_{j}$ for $1\leq j\leq m$
\end{itemize}
\end{thm}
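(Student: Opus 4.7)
\medskip

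The plan is to prove both inclusions between $GPI(D,n)$ and the two-sided ideal $J \subseteq A := D\langle x_1,\ldots,x_n\rangle$ generated by the three listed families of elements.

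The inclusion $J \subseteq GPI(D,n)$ is essentially already contained in the previous section. By Corollary \ref{cor:Existence of constants} each $Y_{ij}$, viewed as an element of $D[x_1,\ldots,x_n]$, takes only values in the center $F$, so it commutes with every $v_k$ and with every $Y_{i'j'}$; and the identity $x_i = \sum_j Y_{ij} v_j$ in $D[x_1,\ldots,x_n]$ is precisely equation (\ref{identity for xi}). So all three families of generators map to zero under the canonical homomorphism $A \to D[x_1,\ldots,x_n]$.

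For the reverse inclusion, the strategy is to build a surjection $D_c[y_{ij}] \twoheadrightarrow A/J$ and use Theorem \ref{thm:main isomorphism} to conclude that $A/J \to D[x_1,\ldots,x_n]$ is injective. Concretely, I would define $\tilde\psi: D_c[y_{ij}] \to A/J$ by sending $y_{ij}$ to the class of $Y_{ij}$ and extending $D$-linearly. This is well-defined as a ring homomorphism precisely because, modulo $J$, the elements $Y_{ij}$ commute pairwise and commute with every $v_k$ (hence with every element of $D$), matching the defining relations of $D_c[y_{ij}]$. Surjectivity of $\tilde\psi$ is where I would spend most of the effort: using the third family of relations, every generator $x_i$ of $A$ is congruent modulo $J$ to $\sum_j Y_{ij} v_j$, so by induction on monomial length every element of $A$ is congruent modulo $J$ to a $D$-linear combination of monomials in the $Y_{ij}$; and then the first two families let one push all $v_k$'s and reorder all $Y_{ij}$'s until one lands in the image of $\tilde\psi$.

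Finally, the composition $D_c[y_{ij}] \xrightarrow{\tilde\psi} A/J \to D[x_1,\ldots,x_n]$ agrees on generators with the map $\psi$ constructed in the proof of Theorem \ref{thm:main isomorphism}, which was shown there to be an isomorphism. Since this composition is injective and $\tilde\psi$ is surjective, the quotient map $A/J \to D[x_1,\ldots,x_n]$ must be injective, which means $GPI(D,n) \subseteq J$. The main obstacle is really the normal-form / rewriting argument needed to prove surjectivity of $\tilde\psi$; everything else is formal once one has carefully checked that the commutation relations in $J$ suffice to collapse an arbitrary word in $A$ to the desired form.
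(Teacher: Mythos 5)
Your proposal is correct and takes essentially the same approach as the paper: both hinge on the rewriting step that every element of $A$ is congruent modulo $J$ to a normal form $\sum a_I (Y_{ij})^I$, and both then invoke the isomorphism $\phi$ (equivalently $\psi$) from Theorem~\ref{thm:main isomorphism} to conclude. The paper simply applies $\phi\circ f$ directly to the normal form of a kernel element rather than packaging the argument as a surjection $D_c[y_{ij}] \twoheadrightarrow A/J$, but the underlying reasoning is the same.
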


\begin{proof}
Let $f:D\langle x_{1},...,x_{n}\rangle\rightarrow D[x_{1},...,x_{n}]$
be the canonical map, and let $K\subseteq D\langle x_{1},...,x_{n}\rangle$
be the ideal generated by the above elements. We shall prove that
$K=\text{ker}f$. We have seen in the proof of Theorem \ref{thm:main isomorphism}
that the elements $Y_{ij}$ have all their values (as functions from
$D^{n}$ to $D$) in $F$, and that the identities $x_{i}=\sum_{j}Y_{ij}v_{j}$
hold in $D$, so all the elements in the above list are indeed in
the kernel of $f$, i.e $K\subseteq\text{ker}f$. Now let $p\in\text{ker}f$.
Using the above relations we can tranform $p$ into an element of
the form $p'=\sum a_{I}(Y_{ij})^{I}$ where the sum is on multi-indices
$I$ which are multisets of pairs $i,j$, and $a_{I}\in D$. We therefore
have (since $p-p'\in K)$ 
\[
0=\phi(f(p))=\phi(f(p'))=\sum a_{I}\left(y_{ij}\right)^{I}
\]
Therefore $a_{I}=0$ for all $I$. We conclude that $p'=0$, hence
$p\in K$.
\end{proof}

\section{The Nullstellensatz}

We now focus on the case $D=\mathbb{H}$, and set $R=\mathbb{H}[x_{1},...,x_{n}]$.
As mentioned in the introduction, there is a conjugation map on $\mathbb{H}$,
defined by 
\[
\overline{a+b\boldsymbol{i}+c\boldsymbol{j}+d\boldsymbol{k}}=a-b\boldsymbol{i}-c\boldsymbol{j}-d\boldsymbol{k}
\]
 Accordingly, for any $f\in R$ the conjugate function $\overline{f}:\mathbb{H}^{n}\rightarrow\mathbb{H}$
is defined by $\overline{f}(a)=\overline{f(a)}$.
\begin{lem}
For any $f\in R$ we have $\overline{f}\in R$.
\end{lem}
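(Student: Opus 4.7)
The plan is to exhibit $\overline{f}$ as an explicit element of $R$ constructed from $f$. The key observation is that conjugation on $\mathbb{H}$ can itself be written as a polynomial function. Concretely, I would first verify by direct computation on $q = a + b\boldsymbol{i} + c\boldsymbol{j} + d\boldsymbol{k}$ (using only $\boldsymbol{i}^2 = \boldsymbol{j}^2 = \boldsymbol{k}^2 = \boldsymbol{ijk} = -1$) the pointwise identity
\[
\overline{q} = -\tfrac{1}{2}\bigl(q + \boldsymbol{i}q\boldsymbol{i} + \boldsymbol{j}q\boldsymbol{j} + \boldsymbol{k}q\boldsymbol{k}\bigr).
\]
Consequently, for each index $i \in \{1,\ldots,n\}$, the element
\[
g_i := -\tfrac{1}{2}\bigl(x_i + \boldsymbol{i}x_i\boldsymbol{i} + \boldsymbol{j}x_i\boldsymbol{j} + \boldsymbol{k}x_i\boldsymbol{k}\bigr) \in R
\]
is a polynomial function whose value at any $(q_1,\ldots,q_n) \in \mathbb{H}^n$ equals $\overline{q_i}$.

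Next I would use the fact that conjugation is $\mathbb{R}$-linear and anti-multiplicative, i.e.\ $\overline{pq} = \overline{q}\,\overline{p}$ for all $p,q \in \mathbb{H}$. It thus suffices, by $\mathbb{R}$-linearity, to handle a single monomial $m = a_0 x_{\mu_1} a_1 x_{\mu_2}\cdots a_{k-1} x_{\mu_k} a_k$. Pointwise,
\[
\overline{m(q_1,\ldots,q_n)} = \overline{a_k}\,\overline{q_{\mu_k}}\,\overline{a_{k-1}}\cdots\overline{a_1}\,\overline{q_{\mu_1}}\,\overline{a_0},
\]
and substituting $\overline{q_{\mu_j}} = g_{\mu_j}(q_1,\ldots,q_n)$ realizes this as the evaluation at $(q_1,\ldots,q_n)$ of the element
\[
m' := \overline{a_k}\, g_{\mu_k}\, \overline{a_{k-1}}\, g_{\mu_{k-1}}\cdots \overline{a_1}\, g_{\mu_1}\, \overline{a_0} \in R.
\]
Summing the corresponding $m'$ over the monomials appearing in the presentation (\ref{eq:function form}) of $f$ yields an element of $R$ whose values agree with $\overline{f}$ at every point of $\mathbb{H}^n$, which is the desired conclusion.

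I do not anticipate a genuine obstacle; the only substantive step is recovering the conjugation map as an $\mathbb{R}$-linear combination of left and right quaternion multiplications, which is the content of the identity verified in the first step. Once this is in place, the anti-multiplicativity of conjugation lets the rest of the argument proceed purely formally at the level of monomials.
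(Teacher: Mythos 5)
Your proof is correct, and it hinges on the same key identity as the paper's: the formula $\overline{q} = -\tfrac{1}{2}(q + \boldsymbol{i}q\boldsymbol{i} + \boldsymbol{j}q\boldsymbol{j} + \boldsymbol{k}q\boldsymbol{k})$ realizing conjugation as a polynomial function. From there, though, you take a considerably longer path: decomposing $f$ into monomials, reversing each monomial via the anti-multiplicativity $\overline{pq}=\overline{q}\,\overline{p}$, and substituting the conjugation-polynomials $g_{\mu_j}$ for the variables. The paper applies the identity once, at the outer level: since $\overline{f}(a) = \overline{f(a)} = -\tfrac{1}{2}\bigl(f(a) + \boldsymbol{i}f(a)\boldsymbol{i} + \boldsymbol{j}f(a)\boldsymbol{j} + \boldsymbol{k}f(a)\boldsymbol{k}\bigr)$ for every $a \in \mathbb{H}^n$, one has the equality of functions $\overline{f} = -\tfrac{1}{2}(f + \boldsymbol{i}f\boldsymbol{i} + \boldsymbol{j}f\boldsymbol{j} + \boldsymbol{k}f\boldsymbol{k})$, and the right-hand side is manifestly in $R$ because $R$ is a ring of functions closed under pointwise sums, products, and multiplication by quaternion constants. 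No decomposition into monomials, no reversal, and no appeal to anti-multiplicativity is required. Your approach does have the incidental virtue of producing an explicit presentation of $\overline{f}$ in the form (\ref{eq:function form}) directly from one for $f$, but for the purposes of the lemma it is extra machinery.
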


\begin{proof}
It follows from the formula of $Y_{1}$ in the introduction that the
following holds for any $a\in\mathbb{H}$:

\[
\overline{a}=-\frac{1}{2}(a+\boldsymbol{i}a\boldsymbol{i}+\boldsymbol{j}a\boldsymbol{j}+\boldsymbol{k}a\boldsymbol{k})
\]

Hence, for any $f\in R$, we have an equality of functions
\[
\overline{f}=-\frac{1}{2}(f+\boldsymbol{i}f\boldsymbol{i}+\boldsymbol{j}f\boldsymbol{j}+\boldsymbol{k}f\boldsymbol{k})
\]
\end{proof}
For any ideal $I\subseteq R$, we define the zero locus of $I$ as
in the commutative case:

\[
\mathcal{Z}_{\mathbb{H}}(I)=\left\{ a=(a_{1},...,a_{n})\in\mathbb{H}^{n}:f(a)=0\text{ for all }f\in I\right\} 
\]
Similarly, for any set $A\subseteq\mathbb{H}^{n}$ we define the ideal
of $A$ by
\[
\mathcal{I}_{\mathbb{H}}(A)=\left\{ f\in R:f(a)=0\text{ for all \ensuremath{a\in A}}\right\} 
\]

\begin{defn}
$\text{ }$
\begin{enumerate}
\item We will call an ideal $I\subseteq R$ \emph{quaternionic }if the following
condition holds: For any $f_{1},...,f_{k}\in R$, if $\sum f_{i}\overline{f_{i}}\in I$
then $f_{i}\in I$ for all i.
\item For any ideal $I\subseteq R$, the \emph{quaternionic radical }of
$I$, $\sqrt[\mathbb{H}]{I}$, is
\[
\sqrt[\mathbb{H}]{I}=\left\{ f\in R:\exists m\geq1,\,\,k\geq0,\text{ }f_{1},...,f_{k}\in R,\text{ \ensuremath{\left(f\overline{f}\right)^{m}+\sum f_{i}\overline{f_{i}}}\ensuremath{\in I}}\right\} 
\]
\end{enumerate}
\end{defn}

\begin{thm}
(Quaternionic Nullstellensatz).\label{Our Nullstellensatz}$\text{ }$Let
$I\subseteq R$ be an ideal.
\begin{enumerate}
\item $I=\mathcal{I}_{\mathbb{H}}(\mathcal{Z}_{\mathbb{H}}(I))$ if and
only if $I$ is quaternionic.
\item We always have $\mathcal{I}_{\mathbb{H}}(\mathcal{Z}_{\mathbb{H}}(I))=\sqrt[\mathbb{H}]{I}$.
\end{enumerate}
\end{thm}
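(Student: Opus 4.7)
The plan is to transport the question through the isomorphism $\phi\colon R\xrightarrow{\sim}\mathbb{H}_c[y_{ij}]$ of Theorem \ref{thm:main isomorphism} (applied with $D=\mathbb{H}$, $m=4$, basis $v_1=1,\,v_2=\boldsymbol{i},\,v_3=\boldsymbol{j},\,v_4=\boldsymbol{k}$), and deduce both parts from the Real Nullstellensatz on $\mathbb{R}[y_{ij}]$. Every element of $\mathbb{H}_c[y_{ij}]$ decomposes uniquely as $g=g_0+\boldsymbol{i}g_1+\boldsymbol{j}g_2+\boldsymbol{k}g_3$ with $g_l\in\mathbb{R}[y_{ij}]$. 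Applying $\phi$ to the identity $\overline{f}=-\tfrac12(f+\boldsymbol{i}f\boldsymbol{i}+\boldsymbol{j}f\boldsymbol{j}+\boldsymbol{k}f\boldsymbol{k})$ of the preceding lemma and simplifying with the quaternion multiplication rules gives $\phi(\overline{f})=\overline{\phi(f)}=g_0-\boldsymbol{i}g_1-\boldsymbol{j}g_2-\boldsymbol{k}g_3$, hence $\phi(f\overline{f})=g_0^2+g_1^2+g_2^2+g_3^2\in\mathbb{R}[y_{ij}]$---the key dictionary between $f\overline{f}$ and a four-fold sum of real squares.

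I would then show that every two-sided ideal $I'\subseteq\mathbb{H}_c[y_{ij}]$ has the form $I'=J\oplus\boldsymbol{i}J\oplus\boldsymbol{j}J\oplus\boldsymbol{k}J$ for $J:=I'\cap\mathbb{R}[y_{ij}]$: the formula $g_0=\tfrac14(g-\boldsymbol{i}g\boldsymbol{i}-\boldsymbol{j}g\boldsymbol{j}-\boldsymbol{k}g\boldsymbol{k})$, together with its analogues for $g_1,g_2,g_3$, extracts each component of any $g\in I'$ as an $\mathbb{R}$-linear combination of two-sided conjugates. Setting $J_I:=\phi(I)\cap\mathbb{R}[y_{ij}]$, the substitution formula (\ref{eq:subst formula}) identifies $\mathbb{H}^n$ with $\mathbb{R}^{4n}$ via $q_i=\sum_j a_{ij}v_j\leftrightarrow(a_{ij})$, and $f$ vanishes at $(q_i)$ iff $\phi(f)$ vanishes at $(a_{ij})$, iff every real component of $\phi(f)$ does. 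Thus $\mathcal{Z}_{\mathbb{H}}(I)$ corresponds to $\mathcal{Z}_{\mathbb{R}}(J_I)$, and $\phi\bigl(\mathcal{I}_{\mathbb{H}}(\mathcal{Z}_{\mathbb{H}}(I))\bigr)=\mathbb{H}\otimes_{\mathbb{R}}\mathcal{I}_{\mathbb{R}}(\mathcal{Z}_{\mathbb{R}}(J_I))$.

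The crux is the identity $\phi(\sqrt[\mathbb{H}]{I})=\mathbb{H}\otimes_{\mathbb{R}}\sqrt[\mathbb{R}]{J_I}$. For $(\subseteq)$, a relation $(f\overline{f})^m+\sum_i f_i\overline{f_i}\in I$ becomes $F^m+S\in J_I$ under $\phi$, where $F=g_0^2+g_1^2+g_2^2+g_3^2$ and $S$ is a sum of squares in $\mathbb{R}[y_{ij}]$; expanding $F^m$ by the multinomial theorem, every monomial $g_0^{2a_0}g_1^{2a_1}g_2^{2a_2}g_3^{2a_3}=(g_0^{a_0}g_1^{a_1}g_2^{a_2}g_3^{a_3})^2$ carries a positive integer coefficient, so isolating the pure term $g_l^{2m}$ gives $g_l^{2m}+S_l\in J_I$ with $S_l$ a sum of squares, i.e.\ $g_l\in\sqrt[\mathbb{R}]{J_I}$ for every $l$. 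For $(\supseteq)$, if each $g_l\in\sqrt[\mathbb{R}]{J_I}$ then $F=\sum_l g_l^2\in\sqrt[\mathbb{R}]{J_I}$ (the real radical is an ideal), so $F^{2m}+\sum_i h_i^2\in J_I$ for suitable $m$ and $h_i$; lifting each $h_i$ to $\tilde f_i:=\phi^{-1}(h_i)\in R$---which is real-valued so that $\tilde f_i\overline{\tilde f_i}=\phi^{-1}(h_i^2)$---produces $(f\overline{f})^{2m}+\sum_i\tilde f_i\overline{\tilde f_i}\in I$. Combined with the Real Nullstellensatz $\mathcal{I}_{\mathbb{R}}(\mathcal{Z}_{\mathbb{R}}(J_I))=\sqrt[\mathbb{R}]{J_I}$, this proves part (2).

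For part (1), $\sum_i f_i\overline{f_i}\in I$ translates under $\phi$ to $\sum_{i,l}((f_i)_l)^2\in J_I$, and every sum of squares in $J_I$ arises this way from real-valued $f_i$; hence $I$ is quaternionic iff $J_I$ is a real ideal. By \cite[Theorem 4.1.4]{RAG} this is equivalent to $J_I=\mathcal{I}_{\mathbb{R}}(\mathcal{Z}_{\mathbb{R}}(J_I))$, which via the ideal correspondence is equivalent to $I=\mathcal{I}_{\mathbb{H}}(\mathcal{Z}_{\mathbb{H}}(I))$. I expect the main technical hurdle to be the multinomial expansion step in the $(\subseteq)$ direction of the radical correspondence, where each non-pure monomial in $F^m$ must be repackaged---together with its positive integer coefficient---as a sum of squares in $\mathbb{R}[y_{ij}]$; the rest is a formal consequence of $\mathbb{H}$ being a central simple $\mathbb{R}$-algebra and of $\phi$ intertwining the quaternionic conjugation with the coefficient-wise conjugation on $\mathbb{H}_c[y_{ij}]$.
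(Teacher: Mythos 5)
Your proposal is correct and follows the same overall strategy as the paper: transport everything through the Wilczy\'{n}ski isomorphism $\phi:R\xrightarrow{\sim}\mathbb{H}_c[y_{ij}]$, use the correspondence between two-sided ideals of $\mathbb{H}_c[y_{ij}]$ and ideals of the central subring $\mathbb{R}[y_{ij}]$, and then invoke the Real Nullstellensatz. Where you diverge from the paper is in the organization and in one genuine simplification of the radical comparison. The paper cites \cite[Proposition 17.5]{GoodReal} for the ideal correspondence, while you re-derive the decomposition $I'=J\oplus\boldsymbol{i}J\oplus\boldsymbol{j}J\oplus\boldsymbol{k}J$ directly via the component-extraction formulas; both are fine. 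More substantively, for part (2) the paper proves the inclusion $\sqrt[\mathbb{H}]{I}\subseteq\mathcal{I}_{\mathbb{H}}(\mathcal{Z}_{\mathbb{H}}(I))$ by a direct pointwise evaluation (positivity of quaternion norms), and the reverse inclusion by expanding $\left(\sum f_i^2\right)^m$ for $m$ ``sufficiently large'' and arguing via a pigeonhole that every monomial dominates some $f_i^{2m_i}$. You instead prove the identity $\phi\bigl(\sqrt[\mathbb{H}]{I}\bigr)=\mathbb{H}\otimes_{\mathbb{R}}\sqrt[\mathbb{R}]{J_I}$ in both directions, and your $(\supseteq)$ direction replaces the pigeonhole entirely by the observation that the real radical is an ideal, so from $g_l\in\sqrt[\mathbb{R}]{J_I}$ you get $F=\sum g_l^2\in\sqrt[\mathbb{R}]{J_I}$ and can quote the radical relation for $F$ directly. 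This is cleaner than the paper's argument, at the modest cost of citing the additional standard fact that $\sqrt[\mathbb{R}]{J_I}$ is an ideal (which is anyway a consequence of the Real Nullstellensatz itself). Your $(\subseteq)$ direction via the multinomial expansion of $F^m$ is correct, though you should make sure it relies on the convention $m\geq 1$ in the definition of $\sqrt[\mathbb{H}]{I}$ (as in Definition 4.2); with $m=0$ there is no $g_l^{2m}$ term to isolate. The translation for part (1), namely that $I$ is quaternionic iff $J_I$ is real and that this in turn is equivalent to $I=\mathcal{I}_{\mathbb{H}}(\mathcal{Z}_{\mathbb{H}}(I))$ via \cite[Theorem 4.1.4]{RAG}, is sound and matches the paper in substance.
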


\begin{proof}
(1) Consider the ring 
\[
S=\mathbb{H}_{c}[y_{ij}:1\leq i\leq n,1\leq j\leq4]
\]
and its subring

\[
S'=\mathbb{R}[y_{ij}:1\leq i\leq n,1\leq j\leq4]
\]

By Theorem \ref{thm:main isomorphism}, we have an isomorphism $\phi:R\xrightarrow{{\sim}}S$.
By Hilbert's basis theorem, $S$ is noetherian, and hence, so is $R$.
By \cite[Proposition 17.5]{GoodReal}, $S'$ is the center of $S$,
any ideal in $S$ is generated by elements in $S'$, and there is
a one-to-one correspondence between ideals in $S$ and ideals in $S'$
given by the following operations: An ideal in $E\subseteq S$ corresponds
to its intersection with $S'$, and an ideal $E'\subseteq S'$ corresponds
to its extension of scalars, 
\[
E'\otimes\mathbb{H}=\left\{ a+b\boldsymbol{i}+c\boldsymbol{j+}d\boldsymbol{k}:a,b,c,d\in E'\right\} \subseteq S
\]

Combining the isomorphism $\phi$ with the above correspondence, we
obtain a one-to-one corresponcence between the ideals of $R$ and
those of $S'$. Let $I'=\phi(I)\cap S'$, the ideal corresponding
to $I$.

Consider the bijection $\rho:\mathbb{H}^{n}\xrightarrow{{\sim}}\mathbb{\mathbb{R}}^{4n}$
defined by 
\[
\rho\left((a_{s1}+a_{s2}\boldsymbol{i}+a_{s3}\boldsymbol{j}+a_{s4}\boldsymbol{k})_{1\leq s\leq n}\right)=(a_{st})_{1\leq s\leq n,1\leq t\leq4}
\]

By the substitution formula (\ref{eq:subst formula}), we have for
any $f\in R$ and $a\in\mathbb{H}^{n}$, 
\begin{equation}
f(a)=\phi(f)(\rho(a))\label{eq:2ndsubstformula}
\end{equation}

For an ideal $I$ in $S$ or $S'$, let us denote the zero locus of
$I$ in $\mathbb{R}^{4n}$ by $\mathcal{Z}_{\mathbb{R}}(I)$.

By (\ref{eq:2ndsubstformula}) there is a one-to-one correspondence
between $\Z_{\mathbb{H}}(I)$ and the real zero locus of $\phi(I)$:
\begin{equation}
\rho(\Z_{\mathbb{H}}(I))=\Z_{\mathbb{R}}(\phi(I))=\Z_{\mathbb{R}}(I')\label{eq:rho_z}
\end{equation}
The last equality follows from $I=I'\otimes\mathbb{H}$.

Let $\psi$ be (as previously denoted) the inverse of $\phi$. For
any $B\subseteq\mathbb{R}^{4n}$ let $\I_{S}(B)$ and $\I_{S'}(B)$
be the set of elements of $S$ (resp. $S'$) which vanish on all the
points of $B$. By (\ref{eq:2ndsubstformula}) we have, for any $A\subseteq\mathbb{H}^{n}$:
\begin{align}
\I_{\mathbb{H}}(A) & =\psi(\I_{S}(\rho(A)))\nonumber \\
 & =\psi(\I_{S'}(\rho(A))\otimes\mathbb{H})\label{eq:IHA}\\
 & =\psi(\I_{S'}(\rho(A)))\otimes\mathbb{H}\nonumber 
\end{align}

We now apply results from real algebraic geometry. Recall the definitions
of a real ideal and the real radical, and the statement of the real
Nullstellensatz, all mentioned in the introduction. We shall use the
notation $\Z_{\mathbb{R}}(J)$ for the real locus of an ideal $J$
in $S'$.

Let us return to the ideal $I\subseteq R$. If $I=\I_{\mathbb{H}}(\Z_{\mathbb{H}}(I))$
then by the definition of $\I_{\mathbb{H}}$, $I$ is quaternionic.
On the other hand, if $I$ is quaternionic then $I'$  is a real ideal
of $S'$: Indeed, if $f_{i}\in S'$ and $\sum f_{i}^{2}\in I'$ then
$\sum\psi(f_{i})^{2}\in I$. Since $\psi(y_{ij})=Y_{ij}$, $\psi(f_{i})$
is a polynomial with real coefficients in the elements $Y_{ij}$,
so $\psi(f_{i})$ attains only real values. Hence, $\psi(f_{i})=\overline{\psi(f_{i})}$.
We get $\sum\psi(f_{i})\overline{\psi(f_{i})}\in I$, hence $\psi(f_{i})\in I$
for all $I$, hence $f_{i}\in I'$ for all $i$. By the Real Nullstellensatz,
we conclude that $\I_{S'}(\Z_{\mathbb{R}}(I'))=I'$. Hence, by (\ref{eq:IHA})
and (\ref{eq:rho_z}),

\begin{align*}
\I_{\mathbb{H}}(\Z_{\mathbb{H}}(I)) & =\psi(\I_{S'}(\rho(\Z_{\mathbb{H}}(I))))\otimes\mathbb{H}\\
 & =\psi(\I_{S'}(\Z_{\mathbb{R}}(I'))\otimes\mathbb{H}\\
 & =\psi(I')\otimes\mathbb{H}=I
\end{align*}
as desired. 

(2) Let $I$ be any ideal of $R$. If $f,f_{i}\in R$, $\ensuremath{\left(f\overline{f}\right)^{m}+\sum f_{i}\overline{f_{i}}}\in I$,
and $a\in\Z_{\mathbb{H}}(I)$, then we have $(f(a)\overline{f(a)})^{m}+\sum f_{i}(a)\overline{f_{i}(a)}=0$,
hence $f(a)=0$. This shows that $\sqrt[\mathbb{H}]{I}\subseteq\I_{\mathbb{H}}(\Z_{\mathbb{H}}(I))$.
On the other hand, using (\ref{eq:IHA}), (\ref{eq:rho_z}) and the
Real Nullstellensatz, we have
\begin{align*}
\I_{\mathbb{H}}(\Z_{\mathbb{H}}(I)) & =\psi(\I_{S'}(\rho(\Z_{\mathbb{H}}(I)))\otimes\mathbb{H}\\
 & =\psi(\I_{S'}(\Z_{\mathbb{R}}(I')))\otimes\mathbb{H}\\
 & =\psi\left(\sqrt[\mathbb{R}]{I'}\right)\otimes\mathbb{H}
\end{align*}
so any element of $\I_{\mathbb{H}}(\Z_{\mathbb{H}}(I))$ is of the
form
\[
f=f_{1}+f_{2}\boldsymbol{i}+f_{3}\boldsymbol{j}+f_{4}\boldsymbol{k}
\]
where $f_{i}\in\psi\left(\sqrt[\mathbb{R}]{I'}\right)$ for all $1\leq i\leq4$.
The elements $f_{i}$, being isomorphic images of elements of the
center of $S$, are in the center of $R$. By the definition of the
real radical, we can find some $m_{i}\geq1$ and $f_{ij}\in\psi(S')$
such that $f_{i}^{2m_{i}}+\sum_{j}f_{ij}^{2}\in\psi(I')$. Hence,
for any $y\in\psi(S')$ we have $y^{2}f_{i}^{2m_{i}}+\sum_{j}(yf_{ij})^{2}\in\psi(I')$.
Let us consider, for any $m\geq1$,
\[
\left(f\overline{f}\right)^{m}=\left(\sum_{i=1}^{4}f_{i}^{2}\right)^{m}
\]
for $m$ sufficiently large, when we expand the right-hand side by
the multinomial formula (keeping in mind that the summands commute),
we get that each summand is of the form $y^{2}f_{i}^{2m_{i}}$ for
some $y\in\psi(S')$ and $1\leq i\leq4$, hence can be complemented
by a sum of squares of elements in $\psi(S')$ to an element of $\psi(I')$.
We conclude that for some $m\geq1$ there are elements $g_{i}\in\psi(S')$,
such that 
\[
\left(f\overline{f}\right)^{m}+\sum g_{i}^{2}\in\psi(I')\subseteq I
\]
but $g_{i}=\overline{g_{i}}$, so $f\in\sqrt[\mathbb{H}]{I}$. Hence,
$\I_{\mathbb{H}}(\Z_{\mathbb{H}}(I))\subseteq\sqrt[\mathbb{H}]{I}$
and we have inclusions in both directions.
\end{proof}
\begin{cor}
The quaternionic radical of an ideal in $R$ is the minimal quaternionic
ideal containing it. \label{cor:The-quaternionic-radical}
\end{cor}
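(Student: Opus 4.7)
The plan is to derive the corollary directly from Theorem \ref{Our Nullstellensatz}. The statement has two parts: first, that $\sqrt[\mathbb{H}]{I}$ is itself a quaternionic ideal containing $I$; second, that it is contained in every quaternionic ideal containing $I$.

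For the first part, I would first observe the elementary fact that for any set $A\subseteq\mathbb{H}^n$, the ideal $\mathcal{I}_{\mathbb{H}}(A)$ is automatically quaternionic: if $\sum f_i\overline{f_i}\in\mathcal{I}_{\mathbb{H}}(A)$, then at every point $a\in A$ we have $\sum f_i(a)\overline{f_i(a)}=0$, and since each $f_i(a)\overline{f_i(a)}$ is a non-negative real number, this forces $f_i(a)=0$ for all $i$, so $f_i\in\mathcal{I}_{\mathbb{H}}(A)$. Combining this with part (2) of Theorem \ref{Our Nullstellensatz}, the equality $\sqrt[\mathbb{H}]{I}=\mathcal{I}_{\mathbb{H}}(\mathcal{Z}_{\mathbb{H}}(I))$ shows that $\sqrt[\mathbb{H}]{I}$ is quaternionic. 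The containment $I\subseteq\sqrt[\mathbb{H}]{I}$ is immediate from the definition, taking $m=1$ and no $f_i$'s: for $f\in I$ we have $f\overline{f}\in I$ since $I$ is a two-sided ideal.

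For the minimality part, let $J$ be any quaternionic ideal with $I\subseteq J$. The argument I would use is purely formal: from $I\subseteq J$ one gets $\mathcal{Z}_{\mathbb{H}}(J)\subseteq\mathcal{Z}_{\mathbb{H}}(I)$, and hence $\mathcal{I}_{\mathbb{H}}(\mathcal{Z}_{\mathbb{H}}(I))\subseteq\mathcal{I}_{\mathbb{H}}(\mathcal{Z}_{\mathbb{H}}(J))$. Now apply part (1) of Theorem \ref{Our Nullstellensatz} to $J$: since $J$ is quaternionic, $\mathcal{I}_{\mathbb{H}}(\mathcal{Z}_{\mathbb{H}}(J))=J$. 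Combined with part (2) applied to $I$, this yields $\sqrt[\mathbb{H}]{I}=\mathcal{I}_{\mathbb{H}}(\mathcal{Z}_{\mathbb{H}}(I))\subseteq J$, which is exactly what we wanted.

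There is essentially no obstacle here, since both directions are straightforward consequences of the already-proved Nullstellensatz; the only substantive point is the observation that $\mathcal{I}_{\mathbb{H}}(A)$ is always quaternionic, and this is a one-line check using positive-definiteness of the quaternionic norm. It is worth remarking, though, that this corollary is the exact non-commutative analog of the classical fact that in a commutative ring the radical of an ideal is the minimal radical ideal containing it, and of its real counterpart.
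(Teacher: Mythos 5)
Your proof is correct and follows essentially the same route as the paper: express $\sqrt[\mathbb{H}]{I}$ as $\mathcal{I}_{\mathbb{H}}(\mathcal{Z}_{\mathbb{H}}(I))$ via part~(2) of Theorem~\ref{Our Nullstellensatz}, note that such an ideal is quaternionic, and then use the order-reversing Galois connection between $\mathcal{I}_{\mathbb{H}}$ and $\mathcal{Z}_{\mathbb{H}}$ together with part~(1) applied to $J$ to get minimality. If anything you are slightly more thorough than the paper's version: you explicitly prove that $\mathcal{I}_{\mathbb{H}}(A)$ is always quaternionic (the paper only records this observation implicitly inside the proof of Theorem~\ref{Our Nullstellensatz}) and you explicitly verify $I\subseteq\sqrt[\mathbb{H}]{I}$, both of which the paper leaves to the reader.
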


\begin{proof}
Indeed, let $I$ be an ideal of $R$. By Theorem \ref{Our Nullstellensatz},
$\sqrt[\mathbb{H}]{I}=\I_{\mathbb{H}}(\Z_{\mathbb{H}}(I))$ is a quaternionic
ideal. If $J$ is a quaternionic ideal containing $\I_{\mathbb{H}}(\Z_{\mathbb{H}}(I))$
then $\Z_{\mathbb{H}}(J)\subseteq\Z_{\mathbb{H}}(I)$, so $J=\I_{\mathbb{H}}(\Z_{\mathbb{H}}(J))\supseteq\I_{\mathbb{H}}(\Z_{\mathbb{H}}(I))$.
\end{proof}
The classical radical of an ideal $I$ in a commutative ring can be
characterized as the intersection of all prime ideals containing it.
Similarly, the real radical of an ideal $I$ in $\mathbb{R}[x_{1},....x_{n}]$
is the intersection of all real prime ideals containing it (see \cite[Proposition 4.1.7]{RAG}).
The following corollary gives an analogous characterization of the
quaternionic radical.
\begin{cor}
Let $I$ be an ideal of $R$. Then $\sqrt[\mathbb{H}]{I}$ is the
intersection of all prime quaternionic ideals containing $I$.\footnote{We follow Krull's definition of a prime ideal in a noncommutative
ring \cite{Krull}, which has the following equivalent phrasing: an
ideal $P$ in $R$ is prime if given $a,b\in R$ with $(a)(b)\subseteq P$
(where $(u)$ is the ideal generated by $u$), it follows that $(a)\subseteq P$
or $(b)\subseteq P$. } \label{cor:prime_intersection}
\end{cor}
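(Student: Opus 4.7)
The plan is to combine Theorem \ref{Our Nullstellensatz}(2) with the classical characterization of the real radical of an ideal $J' \subseteq S' = \mathbb{R}[y_{ij}]$ as $\bigcap\{P' \supseteq J' : P' \text{ real prime}\}$ (\cite[Proposition 4.1.7]{RAG}). One inclusion is essentially free: by Corollary \ref{cor:The-quaternionic-radical}, $\sqrt[\mathbb{H}]{I}$ is the smallest quaternionic ideal containing $I$, hence lies in every prime quaternionic ideal $P \supseteq I$, and therefore in their intersection.

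For the reverse inclusion, the key step is to verify that the bijection from the proof of Theorem \ref{Our Nullstellensatz}, $J \longmapsto J' := \phi(J) \cap S'$ with inverse $J' \longmapsto \psi(J' \otimes \mathbb{H})$, restricts to a bijection between the prime quaternionic ideals of $R$ and the real prime ideals of $S'$. The equivalence ``$J$ is quaternionic if and only if $J'$ is real'' is already implicit in the proof of Theorem \ref{Our Nullstellensatz}, which exploits that every element of $\psi(S')$ is self-conjugate. The easy direction of primality preservation uses the centrality of $\psi(S')$ in $R$: if $J$ is prime in Krull's sense, then $ab \in J'$ with $a,b \in S'$ forces $(\psi(a))(\psi(b)) = \psi(ab)R \subseteq J$, so $a \in J'$ or $b \in J'$.

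The main obstacle is the converse: given a real prime $P'$ of $S'$, show that $P := \psi(P' \otimes \mathbb{H})$ is prime in Krull's sense. Here I would use the quaternionic reduced norm $N(a) := a\overline{a}$, which always lies in $\psi(S')$ and satisfies $a\overline{a} = \overline{a}a$. Given $aRb \subseteq P$, taking $r = \overline{a}$ gives $a\overline{a}b \in P$, and multiplying on the right by $\overline{b}$ yields $a\overline{a}\cdot b\overline{b} = N(a)N(b) \in P$; being central, this element lies in $P \cap \psi(S') = \psi(P')$. Primality of $P'$ then forces $N(a) \in P$ or $N(b) \in P$, and the quaternionic property of $P$ (already established from reality of $P'$) yields $a \in P$ or $b \in P$. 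With the bijection in hand, the conclusion follows by combining Theorem \ref{Our Nullstellensatz}(2), which gives $\sqrt[\mathbb{H}]{I} = \psi(\sqrt[\mathbb{R}]{I'}) \otimes \mathbb{H}$, with the expansion $\sqrt[\mathbb{R}]{I'} = \bigcap_{P' \supseteq I'} P'$ over real primes, and the fact that both $\psi$ and $-\otimes\mathbb{H}$ commute with arbitrary intersections of ideals; the ideals $\psi(P' \otimes \mathbb{H})$ appearing on the right are, by the bijection, exactly the prime quaternionic ideals of $R$ containing $I$.
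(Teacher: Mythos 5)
Your proof is correct, and it takes a genuinely different route from the paper's. The paper argues intrinsically in $R$: starting from an $a$ outside $\sqrt[\mathbb{H}]{I}$, it picks $J$ maximal among quaternionic ideals containing $I$ but not $a$ (Zorn, or noetherianity of $R$), and shows $J$ is prime directly via the norm-product trick: from $a\in\sqrt[\mathbb{H}]{J+(b)}$ and $a\in\sqrt[\mathbb{H}]{J+(b')}$ one multiplies two relations of the form $N(a)^{m}+\sum N(c_i)$ and uses centrality of norms to land $N(a)^{m+k}+(\text{sum of norms})$ in $(J+(b))(J+(b'))\subseteq J$, giving $a\in\sqrt[\mathbb{H}]{J}=J$, a contradiction. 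Your proof instead completes the transfer dictionary from the proof of Theorem \ref{Our Nullstellensatz}: you verify that the order-preserving bijection $J\mapsto J'=\phi(J)\cap S'$ matches prime quaternionic ideals of $R$ with real prime ideals of $S'$, and then pull back the commutative fact $\sqrt[\mathbb{R}]{I'}=\bigcap P'$ through $\psi$ and $-\otimes\mathbb{H}$, both of which commute with intersections (the latter because the decomposition $a+b\boldsymbol{i}+c\boldsymbol{j}+d\boldsymbol{k}$ with coefficients in $S'$ is unique). Your key computation for the harder direction of primality (real prime $\Rightarrow$ Krull-prime) is the same norm trick in miniature: from $aRb\subseteq P$ take $a\overline a b\overline b=N(a)N(b)\in P\cap\psi(S')=\psi(P')$, apply primality of $P'$, then the quaternionic property of $P$. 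What the paper's route buys is a short, self-contained lattice argument that never needs to check compatibility of the bijection with primality; what your route buys is the structural statement that the correspondence $R\leftrightarrow S'$ restricts to a bijection between prime quaternionic ideals and real prime ideals, which is of independent interest and makes the corollary a formal consequence of its commutative counterpart. One small point you should make explicit rather than call ``implicit'': the paper proves only that $I$ quaternionic $\Rightarrow I'$ real; you also need the converse, which follows by writing $f_i=\sum_j g_{ij}v_j$ with $g_{ij}\in\psi(S')$ and observing $\sum_i f_i\overline{f_i}=\sum_{i,j}g_{ij}^2$, but this should be stated.
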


\begin{proof}
By Corollary \ref{cor:The-quaternionic-radical}, $\sqrt[\mathbb{H}]{I}$
is the minimal quaternionic ideal containing $I$, so in particular
it is contained in all quaternionic prime ideals containing $I$.
Conversely, suppose that $a\in R$ is contained in all quaternionic
prime ideals containing $I$, but $a\notin\sqrt[\mathbb{H}]{I}$.
Let $J$ be maximal among all quaternionic ideals containing $I$
but not $a$. We shall show that $J$ is prime and thus get a contradiction.

Indeed, suppose there exist $b,b'\in R$ such that $\ensuremath{(b)\not\nsubseteq J}$
and $\ensuremath{(b')\nsubseteq J}$, but $\ensuremath{(b)(b')\subseteq J}$.
From the maximality assumption on $J$ it follows that $a\in\sqrt[\mathbb{H}]{J+(b)}$
and $a\in\sqrt[\mathbb{H}]{J+(b')}$. For any element $f\in R$, let
$N(f)\in\mathbb{R}[x_{1},\ldots,x_{n}]$ denote the norm $f\overline{f}$.
Then we can write $N(a)^{m}+\sum N(c_{i})\in J+(b)$ and $N(a)^{k}+\sum N(c'_{i})\in J+(b')$.
The product $(N(a)^{m}+\sum N(c_{i}))\cdot(N(a)^{k}+\sum N(c'_{i}))$
equals $N(a)^{m+k}$ plus a sum of norms, and this product belongs
to $(J+(b))\cdot(J+(b'))\subseteq J+J(b')+(b)J+(b)(b')\subseteq J$.
Thus $a\in\sqrt[\mathbb{H}]{J}=J$, a contradiction.
\end{proof}

\end{document}